\newcommand{\Q}{\mathbb{Q}}
\newcommand{\R}{\mathbb{R}}
\newcommand{\C}{\mathbb{C}}
\newcommand{\GL}{\mathrm{GL}}
\newcommand{\SL}{\mathrm{SL}}
\newcommand{\Mn}{\mathrm{M}_n}
\newcommand{\tr}{\operatorname{tr}}
\newcommand{\Csr}{\ensuremath{\mathrm{C}_r^*}}
\newcommand{\U}{\mathcal{U}}
\newcommand{\scal}[2]{\langle #1,#2\rangle}
\newtheorem{theorem}{Theorem}[]
\newtheorem{prop}[theorem]{Proposition}
\theoremstyle{definition}
\newtheorem{definition}[theorem]{Definition}
\theoremstyle{remark}
\newtheorem{remark}[theorem]{Remark}
\begin{document}

\title{C*-algebraic normalization and Godement-Jacquet factors}
\dedicatory{To Gestur \'Olafsson on occasion of his 65\textsuperscript{th} birthday, in admiration.}

\author{Pierre Clare}
\address{Department of Mathematics, College of William \& Mary, P.O. Box 8795, Williamsburg, VA 23187-8795}
\curraddr{}
\email{prclare@wm.edu}
\thanks{}

\subjclass[2010]{22E50,11S40,22D25,46L08}

\date{\today}

\begin{abstract}
We observe how certain distribution on $\GL(n)$ that generates Godement-Jacquet $\gamma$-factors appears naturally in the C*-algebraic normalization process for standard intertwining integrals on $\SL(n+1)$.
\end{abstract}

\maketitle

\section{Introduction}

The purpose of this note is to point out a relationship between well-known factors introduced by R. Godement and H. Jacquet in the 1970's and distributions that occur in the study of the tempered dual of reductive groups in the framework of Noncommutative Geometry. The functions $\gamma$ considered here appear in the functional equations satisfied by the zeta functions associated with irreducible representations of $\GL(n)$ in \cite{GJ}. They are instances of more general factors involved in the local Langlands conjectures: general $\gamma$-factors appear in connection with the lifting map in Langlands functoriality; see for instance the introduction of \cite{BK}.

The other facet of Representation Theory to which we shall connect these objects is the C*-algebraic picture of the tempered dual of reductive groups. In the past few years, various aspects of the tempered theory have been recast in the language of Noncommutative Geometry, thus making the machinery of operator algebras, known to be efficient in the general study of locally compact groups, available to study unitary representations of Lie groups. In particular, the study of parabolic induction by means of Hilbert modules and operator spaces allows to handle generalized principal series representations in families (see Section \ref{sec-ind-parab} below), which has proven useful in the study of adjunction properties for real groups (see \cite{CCH1,CCH2}).

In this picture, the analogues of the meromorphic factors used in Knapp-Stein theory to normalize intertwining integrals are distributions, conjecturally defined on the Levi components of the inducing parabolic subgroups. We observe in Section \ref{sec-resultat} that, when these distributions are known, they can be used to generate $\gamma$-factors in the sense of Braverman and Kazhdan, who conjectured in \cite{BK}, for any quasi-split reductive algebraic group $G$, the existence of a distribution acting in every irreducible representation of $G$ as the multiplication by the associated $\gamma$-factor.

In the case of $\GL(n)$, where the $\gamma$-factors coincide with those of Godement and Jacquet, this distribution is well-known, and our contribution is the observation that this distribution is essentially the inverse for the convolution product of the C*-algebraic normalizing distribution associated with a maximal parabolic subgroup of $\SL(n+1)$. The explicit formula relating the two distributions is given in Proposition \ref{resultat} and we indicate further connections with the Langlands-Shahidi method in Section \ref{sec-concl}.

\subsection*{Notation} In what follows $F$ is a local field, that is, $\R$, $\C$ or a finite extension of $\Q_p$ or of $\mathbb{F}_p((t))$. We fix a non-trivial continuous character $\psi:F\rightarrow\mathrm{U}(1)$. Then the map $a\mapsto\psi(a\,\cdot)$ is a group isomorphism between $F$ and its group of continuous unitary characters. We also fix an additive Haar measure $dx$ on $F$ so that the Fourier transform $\mathcal{F}_\psi$ defined by \[\mathcal{F}_\psi f(x)=\int_F\psi(xy)f(y)\,dy\] is an isometry of $L^2(F)$ with inverse \[f\longmapsto[x\mapsto\int_F\psi(-xy)f(y)\,dy].\]
Throughout the paper, additive Haar measures are denoted by $dx$ and multiplicative Haar measures are denoted by $d^\times x$. In particular, we write $dg$ for the additive Haar measure on the space $\Mn(F)$ of $n\times n$ matrices with entries in $F$ and $d^\times g$ for the Haar measure on $\GL(n,F)$. The two measures are related by \[dg=|\det(g)|^n\,d^\times g\] where $|\,\cdot\,|$ denotes the norm on $F$ that coincides with the modular function associated with the chosen Haar measure. In the non-Archimedean case, it is specified by the relation $|\varpi|=q^{-1}$, where $\varpi$ is a uniformizer and $q$ is the cardinality of the residue field.

\section{Generating Godement-Jacquet factors and the inverse normalizing distribution}

\subsection{$\gamma$-factors and the generating problem} In 1972, Godement and Jacquet introduced zeta functions associated with representations of $\GL(n,F)$ see \cite{GJ,Jacquet_Corvallis,Jacquet96}, generalizing the work of J. Tate on the case of $n=1$ \cite{Tate}. These functions can be defined as follows: \begin{equation}\label{def-zeta}Z(\Phi,s,f_\pi)=\int_{\GL(n,F)}\Phi(g)f_\pi(g)|\det(g)|^s\,d^\times g,\end{equation}
where $s$ is a complex parameter\footnote{Here, we adopt the convention of \cite{BK}. The notation originally used in \cite{GJ} can be recovered by a simple shift, namely replacing $s$ with $s-\frac{n-1}{2}$.}, $\Phi$ is a Schwartz function on $\Mn(F)$ and $f_\pi$ is a matrix coefficient of an admissible representation $\pi$ of $G$, that is, a function of the form \[g\longmapsto \scal{\pi(g)\xi}{\eta}\] with $\xi$ in  the carrying space $V_\pi$ of $\pi$ and $\eta$ in its dual $V_\pi^*$. The convergence of the integral in \eqref{def-zeta} is guaranteed for $\Re(s)$ large enough by \cite[Theorem 3.3]{GJ} and the expression $Z(\Phi,s,f_\pi)$ depends meromorphically on $s$.

An important property of these functions is the functional equation they satisfy, namely: \begin{equation}\label{funct-eq-GJ}Z(\hat{\Phi},n-s,\check{f_\pi})=\gamma(s,\pi)\cdot Z(\Phi,s,f_\pi).
\end{equation}

Here, $\check{f_\pi}(g)=f_\pi(g^{-1})$, so that $\check{f_\pi}$ is a matrix coefficient of the contragredient representation $\pi^*$, and $\hat{\Phi}$ denotes the Fourier transform of $\Phi$. We recall that the Fourier transform on $\Mn(F)$ is the operator \[\begin{array}{ccc}\mathcal{S}(\Mn(F))&\longrightarrow&\mathcal{S}(\Mn(F))\\\Phi&\longmapsto&\hat{\Phi}\end{array}\]
with
\begin{equation}\label{Fourier}\hat{\Phi}(x)=\int_{\Mn(F)}\Phi(y)\psi(\tr(xy))\,dy.\end{equation}

The meromorphic functions $\gamma(s,\pi)$ that appear in the functional equation \eqref{funct-eq-GJ} are examples of $\gamma$-factors. These objects can be defined in a more general setting, which we briefly describe now.

\subsubsection{$\gamma$-factors}

Let $G$ be a split reductive group over $F$ and let $\rho$ be a representation of dimension $n$ of its Langlands dual $G^\vee$. To every irreducible representation $\pi$ of $G$, local factors $\gamma_{\psi,\rho}(s,\pi)$, $L_\rho(s,\pi)$ and $\varepsilon_{\psi,\rho}(s,\pi)$ are conjecturally attached, satisfying the relation \[\gamma_{\psi,\rho}(s,\pi)=\varepsilon_{\psi,\rho}(s,\pi)\frac{L_\rho(1-s,\pi^*)}{L_\rho(s,\pi)}.\]
It is worth noting that, unlike the factors $\gamma$ and $\varepsilon$, the functions $L_\rho$ do not depend on the choice of $\psi$ and that the $\gamma$-factors are meromorphic functions of $s$. They are in fact quotients of products of $\Gamma$ functions if $F$ is Archimedean, and rational functions of $q^s$ in the non-Archimedean case, where $q$ is the cardinality of the residue field.

The Godement-Jacquet factors $\gamma(s,\pi)$ involved in the functional equation \eqref{funct-eq-GJ} are obtained by considering $G=\GL(n,F)$ and letting $\rho=\mathrm{St}_n$ be the standard representation of $G^\vee=\GL(n,F)$ on $F^n$.

\begin{remark}
An explicit description of the Godement-Jacquet factors $\gamma$, $L$ and $\varepsilon$ in terms of the classification of irreducible representations of $\GL(n,F)$ can be found in \cite{KnappGJ} for the Archimedean case and in \cite{Kudla} in the non-Archimedean case.
\end{remark}

\begin{remark}\label{rk-Rankin-Selberg}
The Rankin-Selberg local factors (see \cite{JPSS83}) correspond to the case of $G=\GL(m)\times\GL(n)$ with $\rho=\mathrm{St}_m\otimes\mathrm{St}_n$.
\end{remark}

In \cite{BK}, A. Braverman and D. Kazhdan studied the problem of constructing the factors $\gamma_{\psi,\rho}$ explicitly. They suppose given a character $\sigma$ of $G$ such that the cocharacter \[\sigma^*:\mathbb{G}_m\rightarrow \mathrm{Z}(G^\vee)\] of the center of $G^\vee$ satisfies the relation $\rho\circ\sigma^*=\mathrm{Id}$. Under this assumption, one has \[\gamma_{\psi,\rho}(s,\pi)=\gamma_{\psi,\rho}(0,\pi|\sigma|^s)\] so that the factor $\gamma_{\psi,\rho}$ is determined by $\gamma_{\psi,\rho}(0,\pi)$, thought of as a meromorphic function on the space of irreducible representations of $G$ (see Section 3.4 in \cite{BK} for a precise definition of this notion).

\subsubsection{Generating distribution}\label{sec-gener}

According to Braverman and Kazhdan, $\gamma$-factors should be generated by a single distribution on $G$. Indeed, they prove in some cases the existence of a central stable distribution $\Delta_{\psi,\rho}$ on $G$ whose action in every irreducible representation $\pi$ of $G$ is given by multiplication by $\gamma_{\psi,\rho}$. More precisely, convolution with the distribution\footnote{This distribution is denoted by $\Phi_{\psi,\rho}$ in \cite{BK}.} $\Delta_{\psi,\rho}$ acts in $\pi$ by $\gamma_{\psi,\rho}(0,\pi)$ and in $\pi|\sigma|^s$ by $\gamma_{\psi,\rho}(s,\pi)$, that is, \begin{equation}
\label{eq-Delta}\tag{$\dagger$}\Delta_{\psi,\rho}*(f_\pi|\sigma|^s)=\gamma_{\psi,\rho}(s,\pi)\cdot f_\pi|\sigma|^s
\end{equation}
for every matrix coefficient $f_\pi$ of $\pi$.

\begin{remark}The shifted convention for the complex parameter used in \cite{BK} and here has the merit of leading to the relation \[\Delta_{\psi,\rho}=\Delta_{\psi,\rho_1}*\Delta_{\psi,\rho_2}\] for $\rho=\rho_1\oplus\rho_2$.
\end{remark}

In the case of $G=\GL(n)$ and $\rho=\mathrm{St}_n$, we let \[\sigma=\det:\GL(n)\longrightarrow\mathbb{G}_m\] so that the defining property \eqref{eq-Delta} becomes \begin{equation}
\label{eq-DeltaGJ}\tag{$\ddagger$}\Delta_{\psi,\mathrm{St}_n}*(f_\pi|\det|^s)=\gamma(s,\pi)\cdot f_\pi|\det|^s
\end{equation} with the notation of \eqref{funct-eq-GJ}.

The determination of a distribution $\Delta_{\psi,\mathrm{St}_n}$ satisfying \eqref{eq-DeltaGJ} is very straightforward (see Section \ref{sec-resultat} below). The purpose of the present note is to point out a natural relation with the approach to the representation theory of algebraic groups \textit{via} C*-algebras and Hilbert modules.

\subsection{C*-normalization for degenerate principal series of $\SL(n)$} 

The basic principle underlying the Noncommutative Geometry approach to the study of tempered representations of real reductive groups is that the tempered dual of a topological group can be understood as a \emph{noncommutative space} in the sense of A. Connes \cite{NCG}. If $G$ is a locally compact group, its reduced C*-algebra $\Csr(G)$ is the norm-closure of the convolution algebra $C_c(G)$ of continuous compactly supported functions on $G$ in the algebra of bounded operator on $L^2(G)$. One can show that the spectrum of $\Csr(G)$ coincides with the tempered dual of $G$, making this algebra a useful proxy to study tempered representations of $G$ (see \cite{Dix}).

Following Harish-Chandra's philosophy of cusp forms, particular attention has been devoted in recent work to the description of principal series representations in the C*-algebraic language. We shall recall here only what is needed to establish a connection with the problem of generating $\gamma$-factors. The reader is referred to \cite{Artmodules} and \cite{CCH1} for proofs and details.

\subsubsection{Parabolic induction in the C*-algebraic framework}\label{sec-ind-parab}

Let $G$ be a reductive group and $P$ a parabolic subgroup with Levi component $L$ and unipotent radical $N$ so that $P=LN$. The generalized principal series representations induced from $P$ can be described in terms of certain $\Csr(G)$-$\Csr(L)$-bimodules (or correspondences) $\Csr(G/N)$ obtained as completions\footnote{These modules were denoted by $\mathcal{E}(G/N)$ in \cite{Artmodules} where they were introduced, and by $\Csr(G/N)$ in \cite{CCH1} where they were analyzed in further detail.} of $C_c(G/N)$.

The most important property of the module $\Csr(G/N)$ is that its induces the principal series attached to $P$ in the sense of M. A. Rieffel \cite{IRCA}:

\begin{prop}The Hilbert $\Csr(L)$-module $\Csr(G/N)$ is equipped with an action of $\Csr(G)$ through compact operators and yields a unitary equivalence of $G$-representations \[\Csr(G/N)\otimes_{\Csr(L)}\mathcal{H}_\lambda\simeq\operatorname{Ind}_P^G\lambda\otimes1_N\] for any tempered unitary representation $\lambda$ of $L$ and $1_N$ the trivial representation of $N$.
\end{prop}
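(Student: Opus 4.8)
The strategy is to obtain $\Csr(G/N)$ by descending, along the quotient homomorphism $P\to P/N\simeq L$, the Rieffel imprimitivity-type bimodule that implements ordinary induction from $P$, and then to read off the three assertions from the general machinery of \cite{IRCA}. Throughout, $\delta_P$ denotes the modular function of $P$.

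First I would put the right pre-Hilbert $C_c(L)$-module structure on $C_c(G/N)$. Since $G$ is unimodular and $N$ unipotent, $G/N$ carries a $G$-invariant Radon measure, and $L\simeq P/N$ acts on $G/N$ by right translation, scaling this measure by $\delta_P$. One then defines a right convolution action of $C_c(L)$ and a $C_c(L)$-valued pairing by the usual formulas, e.g.
\[
(f\cdot\varphi)(x)=\int_L\delta_P(l)^{1/2}f(x\cdot l^{-1})\varphi(l)\,dl,
\qquad
\scal{f_1}{f_2}_L(l)=\delta_P(l)^{1/2}\!\int_{G/N}\overline{f_1(x)}\,f_2(x\cdot l)\,dx .
\]
The one point that is not purely formal is \emph{positivity}: $\scal{f}{f}_L$ must be a positive element of $\Csr(L)$. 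This I would deduce from the group case in \cite{IRCA}: the averaging map $C_c(G)\to C_c(G/N)$, $F\mapsto\big(gN\mapsto\int_N F(gn)\,dn\big)$, is surjective and carries the $C_c(P)$-valued imprimitivity pairing on $C_c(G)$ to the pairing above, after inflation along $P\to L$; since $N$ is amenable this inflation extends to a surjection $\Csr(P)\twoheadrightarrow\Csr(L)$, and positivity is inherited through it. Completing then yields the Hilbert $\Csr(L)$-module $\Csr(G/N)$.

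Next, left translation makes $C_c(G/N)$ a left $C_c(G)$-module; left convolution by $F\in C_c(G)$ is adjointable with adjoint implemented by $F^{*}$, and the resulting representation of $C_c(G)$ on $\Csr(G/N)$ is bounded for the reduced norm — here one uses that $\Ind_P^G(\lambda\otimes 1_N)$ is tempered whenever $\lambda$ is, so that the a priori $\Cs(G)$-action factors through $\Csr(G)$. The genuinely analytic step is that this action lands in the \emph{compact} operators $\mathcal K(\Csr(G/N))$: I would show that each $\pi(F)$, $F\in C_c(G)$, is a norm-limit of finite sums of rank-one operators $\theta_{f_1,f_2}\colon\xi\mapsto f_1\cdot\scal{f_2}{\xi}_L$, by convolving $F$ with an approximate unit on $N\backslash G/N$ and exploiting the local triviality of $G\to G/N$ to cut the convolution kernel into pieces supported on trivializing charts. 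This compactness assertion — together with the care needed to stay within the \emph{reduced} rather than the full C*-algebras — is the step I expect to be the main obstacle.

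Finally I would identify the interior tensor product. By construction $\Csr(G/N)\otimes_{\Csr(L)}\mathcal H_\lambda$ is the Hausdorff completion of the algebraic tensor product for $\scal{f_1\otimes v_1}{f_2\otimes v_2}=\scal{v_1}{\lambda(\scal{f_1}{f_2}_L)v_2}$, with $G$ acting by left translation on the first factor. The map
\[
f\otimes v\longmapsto\Big(g\mapsto\int_L\delta_P(l)^{-1/2}\lambda(l)^{-1}v\cdot f(g\cdot lN)\,dl\Big)
\]
— the ``un-averaging'' of $f$ along the $L$-direction — sends this into the Hilbert space of $\Ind_P^G(\lambda\otimes 1_N)$, realized as the $L^2$-sections over $G/P$ of the homogeneous bundle with fibre $\mathcal H_\lambda$ twisted by $\delta_P^{1/2}$. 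A direct computation shows this map is isometric for the two inner products, intertwines the left $G$-actions, and has dense range, so it extends to the asserted unitary equivalence; the same computation, run with $F\in C_c(G)$ in place of $f$, makes the compatibility of the $\Csr(G)$-actions transparent.
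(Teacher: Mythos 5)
The paper does not actually prove this proposition: it is quoted from \cite{Artmodules} and \cite{CCH1}, and your plan is, in outline, exactly the construction carried out there — the $\delta_P^{1/2}$-twisted $C_c(L)$-valued pairing on $C_c(G/N)$, positivity inherited from Rieffel's imprimitivity pairing on $C_c(G)$ through the integration map $C_c(P)\to C_c(L)$ (with amenability of $N$ giving the surjection $\Csr(P)\twoheadrightarrow\Csr(L)$), and the ``un-averaging'' map identifying $\Csr(G/N)\otimes_{\Csr(L)}\mathcal{H}_\lambda$ with the normalized induced picture. You have also correctly isolated the genuinely analytic point, namely that $C_c(G)$ acts by operators in $\mathcal{K}(\Csr(G/N))$ and that the action is bounded for the \emph{reduced} norm (the latter resting on temperedness of $\Ind_P^G(\lambda\otimes 1_N)$ for tempered $\lambda$). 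Two remarks on that crux and on details. First, the fibration you should exploit is $G/N\to G/P$, which is a principal $L$-bundle over a \emph{compact} base, not $G\to G/N$; the cited proofs make this concrete via the Iwasawa-type decomposition $G=KP$, which identifies $\Csr(G/N)$ with a module built from $L^2(K)$ and $\Csr(L)$ (roughly $L^2(K)\otimes_{\Csr(K\cap L)}\Csr(L)$), after which compactness of the $C_c(G)$-action reduces to the fact that continuous kernels on the compact space $K$ give compact operators on $L^2(K)$, while in the fibre direction $\mathcal{K}(\Csr(L))=\Csr(L)$. Your ``cut the kernel into trivializing charts'' strategy is the same idea and can be made to work, but as written it is the thinnest step of the plan. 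Second, check the covariance of your displayed intertwiner: with $\delta_P(l)^{-1/2}\lambda(l)^{-1}v\,f(g\cdot lN)$ the resulting function does not transform according to $\Ind_P^G(\lambda\otimes 1_N)$ under right translation by $L$; the correct integrand (in the normalized picture) is of the form $\delta_P(l)^{1/2}\lambda(l)v\,f(g\cdot lN)$. This is a conventions-level slip rather than a conceptual gap, and the ``direct computation'' you defer to would surface it.
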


See \cite{Artmodules} and \cite{CCH1} for a proof and further discussion of this result. In view of this property, the bimodules $\Csr(G/N)$ are sometimes referred to as \emph{C*-algebraic universal principal series}, adopting the terminology used in the context of $p$-adic groups, for instance in the the geometric approach to second adjointness (see \cite{BezKaz}), which was a part of the original motivation to study these modules. See \cite{CCH2} for general adjunction properties of the C*-algebraic functors.

\subsubsection{Intertwining integrals and C*-normalization}

Another aspect of the C*-algebraic picture concerns the description of intertwining relations between principal series at the level of the Hilbert bimodules $\Csr(G/N)$. The approach to this problem adopted in \cite{C*entrelacSL2} and \cite{C*entrelacSLn} is directly inspired by the work of A. Knapp and E. M. Stein on intertwining integrals and their normalization \cite{KS1, KS2}.

With $G$ and $P$ as before, we consider the parabolic subgroup $\bar{P}$ opposite to $P$. It decomposes as $\bar{P}=L\bar{N}$, giving rise to another $\Csr(G)$-$\Csr(L)$-correspondence $\Csr(G/\bar{N})$. The C*-algebraic analogues of intertwining operators between principal series representations are defined as operators between correspondences:

\begin{definition}\label{def-C*entrelac}
A \emph{C*-algebraic intertwiner} is a bounded adjointable $\Csr(L)$-linear operator \[\Csr(G/N)\longrightarrow\Csr(G/\bar{N})\] that commutes with the $\Csr(G)$-actions.
\end{definition}

The basic observation made in \cite{Artmodules} is that the standard intertwining integral \begin{equation}\label{def-int-stand}\mathcal{I}(f)(g\bar{N})=\int_{\bar{N}}f(g\bar{n})\,d\bar{n}
\end{equation} converges for functions in $C_c(G/N)\subset\Csr(G/N)$. However, although this expression formally satisfies good equivariance properties with respect to convolution actions of $C_c(G)\subset\Csr(G)$ and $C_c(L)\subset\Csr(L)$, it does not extend to a C*-algebraic intertwiner in the sense of Definition \ref{def-C*entrelac}.

To circumvent this issue, we proposed in \cite{C*entrelacSL2,C*entrelacSLn} a framework to adapt the normalization procedure used in Knapp-Stein theory.

\begin{definition}[C*-normalization]
A map $\U:\Csr(G/N)\longrightarrow\Csr(G/\bar{N})$ is said to \emph{normalize} the standard intertwining integral $\mathcal{I}$ in \eqref{def-int-stand} if
\begin{itemize}
\item[(i)]$\U$ is a C*-algebraic intertwiner that preserves $\Csr(L)$-norms;
\item[(ii)]there exists a distribution $\Gamma$ on $L$ such that \[\U=\mathcal{I}\circ C_\Gamma\] on a dense subspace of functions in $\Csr(G/N)$.
\end{itemize}
Here, $C_\Gamma$ denotes the convolution operator $f\mapsto f*\Gamma$.
\end{definition}

A distribution $\Gamma$ satisfying (ii) in the definition above is said to \emph{normalize} the standard integral $\mathcal{I}$.

The problem of normalizing standard intertwining integrals in the above sense has been solved so far in the particular case of maximal parabolic subgroup of the the special linear group. Both the unitary $\U$ and the normalizing distribution have been calculated explicitly, but we shall only need the latter for our current purposes:

\begin{prop}\cite{C*entrelacSLn}\label{prop-Gamma}
Let $G=\SL(n+1,F)$ and $P$ be a maximal parabolic subgroup $P$ with Levi component $L\simeq\GL(n,F)$. The distribution \[\Gamma=|\det(g)|^{\frac{n+1}{2}}\psi(\tr(g^{-1}))\,d^\times g\]
normalizes $\mathcal{I}$.
\end{prop}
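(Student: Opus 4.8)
The plan is to verify the two conditions defining a normalization for the operator $\U:=\mathcal{I}\circ C_\Gamma$. Condition (ii) holds by construction, so the whole content lies in condition (i): that $\U$ extends to a $\Csr(G)$-equivariant, $\Csr(L)$-linear operator $\Csr(G/N)\to\Csr(G/\bar N)$ which preserves $\Csr(L)$-norms. I would reduce this, via the Rieffel-induction property recalled above, to a statement about ordinary induced representations: such a $\U$ induces, for each tempered unitary $\lambda$ of $L\simeq\GL(n,F)$, a $G$-equivariant operator $\U_\lambda:\Ind_P^G\lambda\to\Ind_{\bar P}^G\lambda$, and $\U$ preserves $\Csr(L)$-norms as soon as the family $\{\U_\lambda\}$ is uniformly bounded with each $\U_\lambda$ isometric. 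It then suffices to show that $\mathcal{I}\circ C_\Gamma$ induces on each fiber, up to a unimodular scalar, the classical \emph{normalized} Knapp--Stein intertwiner for $P\subset\SL(n+1,F)$ of \cite{KS1,KS2}, which is known to be unitary along the tempered spectrum.

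The heart of the matter is a change-of-variables computation on the open Bruhat cell. For a maximal parabolic $P=LN$ of $\SL(n+1,F)$ one has $N\cong F^n$ and $\bar N\cong F^n$, with $L\simeq\GL(n,F)$ acting on $N\cong F^n$ by $v\mapsto\det(a)\,av$ for $a\in\GL(n,F)$; hence the modulus character of $P$ restricts to $|\det|^{\,n+1}$ on the Levi, which is where the exponent $\tfrac{n+1}{2}$ in $\Gamma$ comes from. On the big cell $\bar N P/N\cong F^n\times\GL(n,F)$ one identifies a dense subspace of $\Csr(G/N)$ with $C_c(F^n\times\GL(n,F))$; in these coordinates $\mathcal{I}$ becomes an explicit transform built from the Bruhat cocycle attached to $n(v)\ell\bar n(u)=\bar n(u')\ell'n(v')$ --- inverting the identity plus a rank-one matrix produces the scalar $1+\scal{v}{u}$ and powers of $|\det|$, together with a partial Fourier transform in the $F^n$-variable --- while $C_\Gamma$ acts as right convolution in the $\GL(n,F)$-variable. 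Passing to the fiber over $\lambda$, the operator $C_\Gamma$ becomes multiplication by \[\lambda(\Gamma)=\int_{\GL(n,F)}|\det(g)|^{\frac{n+1}{2}}\,\psi(\tr(g^{-1}))\,\lambda(g)\,d^\times g,\] understood by meromorphic continuation in the inducing parameter. After the substitution $g\mapsto g^{-1}$ this integral is precisely the action, in a suitable twist of $\lambda$, of the Godement--Jacquet generating distribution of \eqref{eq-DeltaGJ}, hence a $\gamma$-factor; and since the Levi of $P$ is $\GL(n)$ with the standard representation $\mathrm{St}_n$ appearing on the dual side, this $\gamma$-factor is exactly the Langlands--Shahidi normalizing factor for $P$. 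Because $\mathcal{I}_\lambda^*\mathcal{I}_\lambda$ is the scalar equal to the square of the modulus of that $\gamma$-factor on the tempered axis --- a Gindikin--Karpelevich computation of the composite $\Ind_P^G\lambda\to\Ind_{\bar P}^G\lambda\to\Ind_P^G\lambda$ --- the operator $\mathcal{I}_\lambda\circ\lambda(\Gamma)$ is unitary for tempered $\lambda$, which is what we need. (Alternatively, and this is presumably the route of \cite{C*entrelacSLn}, one recalls the explicitly known unitary $\U$ and verifies the identity $\U=\mathcal{I}\circ C_\Gamma$ directly on $C_c$ of the big cell by this same change of variables.)

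I expect the main obstacle to be not this formal identity but the analytic bookkeeping required to make the $\Csr$-level statements precise. The distribution $\Gamma$ is not integrable and $\mathcal{I}$ is itself unbounded on $\Csr(G/N)$, so one must fix a workable dense domain --- smooth, compactly supported functions supported in the big cell --- bound the contribution of its lower-dimensional complement, and, crucially, obtain estimates \emph{uniform} in the tempered parameter of $\lambda$ in order to pass from fiberwise unitarity to the statement about the Hilbert module. One also needs $\Gamma$ to act by the scalar $\gamma(\,\cdot\,,\pi)^{-1}$ in \emph{every} tempered irreducible of $\GL(n,F)$, not only in the twists $\pi\,|\det|^{it}$ of discrete series, which brings in the Langlands classification and the multiplicativity of $\gamma$-factors under parabolic induction on the Levi. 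These points are carried out in detail in \cite{C*entrelacSLn}.
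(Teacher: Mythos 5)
This proposition is imported from \cite{C*entrelacSLn}: the paper states it with a citation and offers no proof, so there is no in-paper argument to compare against. Your outline is architecturally consistent with what that reference does --- reduce norm-preservation of $\U=\mathcal{I}\circ C_\Gamma$ to fiberwise unitarity via Rieffel induction, compute on the big Bruhat cell, and identify the scalar by which the central distribution $\Gamma$ acts in each tempered irreducible of $\GL(n,F)$ with the reciprocal of the Gindikin--Karpelevich factor of $\mathcal{I}_\lambda$. Your derivation of the exponent $\tfrac{n+1}{2}$ from $\delta_P^{1/2}$ is correct, and the connection you draw to the Godement--Jacquet functional equation is exactly the point of Proposition \ref{resultat}.

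As a proof, however, the submission has a genuine gap: the one quantitative fact that everything hinges on --- that $|\lambda(\Gamma)|=\|\mathcal{I}_\lambda\|^{-1}$ for \emph{every} tempered irreducible $\lambda$ of $\GL(n,F)$ --- is asserted rather than established, and the assertion as written is internally inconsistent. You say both that $\lambda(\Gamma)$ ``is a $\gamma$-factor'' and that $\mathcal{I}_\lambda^*\mathcal{I}_\lambda$ equals the squared modulus of ``that $\gamma$-factor''; if both held literally, $\mathcal{I}_\lambda\circ\lambda(\Gamma)$ would have norm $|\gamma|^2$, not $1$. The resolution (visible in Proposition \ref{resultat}, where $\Gamma$ appears as a convolution \emph{inverse} of the generating distribution $\Delta_{\psi,\mathrm{St}_n}$ up to a twist) is that the substitution $g\mapsto g^{-1}$ produces the $\gamma$-factor of the contragredient, i.e.\ essentially $\gamma(\cdot,\lambda)^{-1}$, and this sign in the exponent is precisely what makes the normalization work; a proof must pin it down, not gesture at it. Beyond that, every analytic step you flag as ``the main obstacle'' --- regularizing $\mathcal{I}$ and $C_\Gamma$ on a dense domain, controlling the complement of the big cell, uniformity over the tempered dual needed to pass to the $\Csr$-completion, and the reduction via the Langlands classification to discrete-series twists --- is deferred to the very reference being cited. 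So what you have written is an accurate survey of how the proof goes, not a proof.
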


\begin{remark}
The existence of C*-algebraic intertwiners in the sense of Definition \ref{def-C*entrelac} is crucial in the study of the structure of the reduced C*-algebra $\Csr(G)$. It can be obtained in general by reformulating the results of \cite{KS1,KS2}; see Theorem 6.1 in \cite{CCH1}.
\end{remark}

\subsection{Relationship in the case of $\GL(n)$}\label{sec-resultat}

We are now in a position to establish a connection between the problem of explicitly generating $\gamma$-factors in the sense of Braverman-Kazhdan and that of calculating normalizing distributions for C*-algebraic intertwining integrals. We consider the case of $\GL(n)$, where both problems have known answers.

We shall say that two distributions $S$ and $T$ on a group are \emph{inverse} to each other if their convolution is the Dirac measure at the neutral element, that is, \[S*T=\delta_{\mathrm{Id}}.\] Moreover, if $T$ is a distribution on $\GL(n,F)$ of the form $u(g)\,d^\times g$, we write \[\tilde{T}=u(-g)\,d^\times g.\] 

\begin{prop}\label{resultat}The distribution $\Delta_{\psi,\mathrm{St}_n}$ generating the Godement-Jacquet $\gamma$-factors is related to the C*-algebraic normalizing distribution $\Gamma$ associated to a parabolic subgroup of $\SL(n+1,F)$ with Levi component $\GL(n,F)$ by \[\Delta_{\psi,\mathrm{St}_n}=|\det|^{\frac{n+1}{2}}\,\tilde{\Gamma}^{-1}.\]
\end{prop}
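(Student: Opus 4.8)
The plan is to work out both distributions explicitly as functions times $d^\times g$ on $\GL(n,F)$ and then check that the claimed identity $\Delta_{\psi,\mathrm{St}_n} = |\det|^{\frac{n+1}{2}}\,\widetilde{\Gamma}^{-1}$ holds by verifying the defining property $(\ddagger)$ of $\Delta_{\psi,\mathrm{St}_n}$. First I would identify $\Delta_{\psi,\mathrm{St}_n}$ itself. Recall the Godement-Jacquet zeta integral \eqref{def-zeta} and its functional equation \eqref{funct-eq-GJ}: since $Z(\widehat{\Phi},n-s,\check{f_\pi}) = \gamma(s,\pi)\,Z(\Phi,s,f_\pi)$ and the left-hand side is, after unwinding the Fourier transform on $\Mn(F)$ and the change of variables $g\mapsto g^{-1}$ (using $dg = |\det g|^n\,d^\times g$), itself a zeta integral against $f_\pi|\det|^s$ convolved with an explicit kernel, one reads off that $\Delta_{\psi,\mathrm{St}_n}$ is the distribution $|\det(g)|^{n}\,\psi(\tr(g))\,d^\times g$ — up to a normalization of the Fourier transform — acting by convolution. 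This is the ``very straightforward'' computation alluded to before Proposition \ref{prop-Gamma}; I would state it as a short preliminary lemma, verifying $(\ddagger)$ directly from \eqref{funct-eq-GJ} by substituting the integral expressions and matching variables.

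Next I would compute $\widetilde{\Gamma}^{-1}$. By Proposition \ref{prop-Gamma}, $\Gamma = |\det(g)|^{\frac{n+1}{2}}\,\psi(\tr(g^{-1}))\,d^\times g$, so by the definition of the tilde operation, $\widetilde{\Gamma} = |\det(g)|^{\frac{n+1}{2}}\,\psi(\tr((-g)^{-1}))\,d^\times g = |\det(g)|^{\frac{n+1}{2}}\,\psi(-\tr(g^{-1}))\,d^\times g$ (using $(-g)^{-1} = -g^{-1}$ and $|\det(-g)| = |\det g|$). The key structural point is that conjugation/inversion $g\mapsto g^{-1}$ intertwines convolution in a controlled way on $\GL(n,F)$, and that the Fourier transform \eqref{Fourier} on $\Mn(F)$ transported to $\GL(n,F)$ sends the ``additive character'' kernel $\psi(\tr(g))$ to something supported via the zeta-integral mechanism. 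Concretely, I expect the cleanest route is: the operator $C_\Gamma$ of convolution by $\Gamma$ and the operator $C_{\Delta}$ of convolution by $\Delta_{\psi,\mathrm{St}_n}$ are both, after the substitution $g\mapsto g^{-1}$ and multiplication by a power of $|\det|$, realized as (variants of) the Fourier transform $\mathcal{F}_\psi$ on $\Mn(F)$ restricted to $\GL(n,F)$; since $\mathcal{F}_\psi$ composed with $f\mapsto \widehat{f}(-\cdot)$ is the identity (this is precisely the inversion formula stated for $\mathcal{F}_\psi$ in the Notation section, lifted to $\Mn$), the two distributions are inverse to each other up to the explicit factor $|\det|^{\frac{n+1}{2}}$ and the sign flip encoded by the tilde. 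I would organize this as: (1) express $C_\Gamma \circ (\text{mult. by } |\det|^{\frac{n+1}{2}}) \circ (\text{tilde})$ as an operator $T$; (2) show $T\circ C_{\Delta}$ and $C_{\Delta}\circ T$ both equal the identity on a dense subspace, by reducing each to the Fourier inversion formula on $\Mn(F)$.

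The main obstacle I anticipate is bookkeeping of the three intertwined involutions — matrix inversion $g\mapsto g^{-1}$, matrix negation $g\mapsto -g$, and the measure conversion $dg = |\det g|^n\,d^\times g$ — together with getting the power of $|\det|$ exactly right: $\Gamma$ carries $|\det|^{\frac{n+1}{2}}$, $\Delta_{\psi,\mathrm{St}_n}$ effectively carries $|\det|^{n}$ (from the $dg\leftrightarrow d^\times g$ conversion in the Godement-Jacquet integral), and the identity asserts the discrepancy is absorbed by the explicit prefactor $|\det|^{\frac{n+1}{2}}$ in the statement; checking $\frac{n+1}{2} + \frac{n+1}{2} = n+1$ against the shift $n$ coming from $\Delta$ requires care about where the ``$s$'' in $|\det|^s$ sits and how the functional-equation shift $s\mapsto n-s$ interacts with it. A secondary subtlety is convergence: the standard intertwining integral $\mathcal{I}$ and the zeta integral both converge only on suitable dense subspaces (Schwartz functions on $\Mn(F)$, matrix coefficients of tempered representations), and the convolution identity $S*T = \delta_{\mathrm{Id}}$ should be interpreted in the sense of operators on such a dense subspace — I would make this precise by testing against matrix coefficients $f_\pi|\det|^s$ and invoking $(\ddagger)$ and the known action of $\Gamma$ from \cite{C*entrelacSLn}, rather than manipulating the distributions pointwise. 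Modulo these normalization checks, the proof is essentially the Tate/Godement-Jacquet local functional equation read through the dictionary set up in the earlier sections.
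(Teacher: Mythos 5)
Your proposal is correct and follows essentially the same route as the paper: both reduce the claim to the explicit identity $|\det|^{\frac{n+1}{2}}\,\tilde{\Gamma}^{-1}=|\det(g)|^{n}\,\psi(\tr(g))\,d^{\times}g$ and verify the Braverman--Kazhdan relation $(\ddagger)$ by unwinding the Godement--Jacquet functional equation with the change of measure $dg=|\det(g)|^{n}\,d^{\times}g$. The only difference is one of ordering and emphasis: you identify $\Delta_{\psi,\mathrm{St}_n}$ first and then check the convolution-inverse relation via Fourier inversion on $\Mn(F)$, whereas the paper asserts the inversion formula as a ``straightforward calculation'' and spends its detail on verifying $(\ddagger)$ directly.
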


\begin{proof} The expression of $\Gamma$ given in Proposition \ref{prop-Gamma} shows that \[\tilde{\Gamma}=|\det(g)|^{\frac{n+1}{2}}\psi(\tr(-g^{-1}))\,d^\times g\]
A straightforward calculation shows that the inverse for convolution of a distribution of the form \[|\det(g)|^\alpha\,\psi(\tr(-g^{-1}))\,d^{\times }g\] is \[|\det(g)|^{n-\alpha}\,\psi(\tr(g))\,d^{\times }g.\] It follows that \begin{equation}\label{eq-Delta-Gamma}|\det|^{\frac{n+1}{2}}\,\tilde{\Gamma}^{-1}=|\det(g)|^n\,\psi(\tr(g))\,d^{\times }g\end{equation} and it suffices to check that this distribution satisfies the Braverman-Kazhdan relation \eqref{eq-DeltaGJ}.

Indeed, given an irreducible representation $\pi$ of $\GL(n,F)$ and a matrix coefficient $f_\pi$, the functional equation \eqref{funct-eq-GJ} can be written as \begin{multline}\label{eq-eqfonct-dev}\int_{\GL(n,F)}\hat{\Phi}(g)f_\pi(g^{-1})|\det(g)|^{n-s}\,d^\times g\\=\gamma(s,\pi)\int_{\GL(n,F)}\Phi(g)f_\pi(g)|\det(g)|^s\,d^\times g.\end{multline} By definition of the Fourier transform \eqref{Fourier}, the left-hand side equals \[\int_{\GL(n,F)}\int_{\Mn}\Phi(x)\,\psi(\tr(xy))\,f_\pi(y^{-1})\,|\det(y)|^{n-s}\,dx\,d^\times y\]
and since $dx = |\det(x)|^n\,d^\times x$, the fact that \eqref{eq-eqfonct-dev} holds for any $\Phi$ is equivalent to having \begin{multline}\int_{\GL(n,F)}|\det(x)|^n\,\psi(\tr(xy))\,f_\pi(y^{-1})\,|\det(y)|^{n-s}\,d^\times y\\=\gamma(s,\pi)\cdot f_\pi(x)|\det(x)|^s\end{multline} for any matrix coefficient $f_\pi$. Letting $g=xy$, the left-hand side becomes \[\int_{\GL(n,F)}|\det(g)|^n\,\psi(\tr(g))\,\left(f_\pi\cdot|\det|^s\right)(g^{-1}x)\,d^\times z,\]
that is, according to \eqref{eq-Delta-Gamma}\[\left(|\det|^{\frac{n+1}{2}}\,\tilde{\Gamma}^{-1}*(f_\pi\cdot|\det|^s)\right)(x).\]
This shows that the functional equation \eqref{eq-eqfonct-dev} is equivalent to \[|\det|^{\frac{n+1}{2}}\,\tilde{\Gamma}^{-1}*(f_\pi\cdot|\det|^s)=\gamma(s,\pi)\cdot f_\pi|\det|^s\] which is exactly the relation \eqref{eq-DeltaGJ}, thus concluding the proof.
\end{proof}

\begin{remark}
A similar method to normalize intertwiners (at the level of $L^2$ spaces) by convolution was used in \cite{BK02}. See in particular Example 4.4.
\end{remark}

\section{Concluding remarks}\label{sec-concl}

To conclude this note, we shall point out that the existence of a relationship between $\gamma$-functions and normalization factors for intertwining integrals has been well-known for a long time. In particular the observation of Proposition \ref{resultat} can be interpreted as a consequence of the Langlands-Shahidi method (see \cite{Shahidi_Eisenstein} for a general exposition).

Assume that $F$ is non-Archimedean and consider the parabolic subgroup $P$ of type $(p,q)$ in $G=\SL(p+q,F)$, that is, the group of block upper-triangular matrices of the form \[\left[\begin{array}{cc}\alpha&*\\0&\beta\end{array}\right]\] with $\alpha$ in $\GL(p,F)$ and $\beta$ in $\GL(q,F)$ satisfying $\det(\alpha)\cdot\det(\beta)=1$. Denote by $L$ the Levi subgroup of block diagonal matrices and by $N$ the unipotent radical of $P$. The opposite parabolic subgroup containing $L$ is denoted by $\bar{P}=L\bar{N}$ and $\bar{N}$ consists of matrices of the form \[\left[\begin{array}{cc}\mathrm{Id}_p&0\\ *&\mathrm{Id}_q\end{array}\right].\]

In this setting, the standard integral \eqref{def-int-stand} defines an unbounded operator \[A:L^2(G/N)\longrightarrow L^2(G/\bar{N})\] and F. Shahidi has proved in \cite{Shahidi84} that the Rankin-Selberg $\gamma$-factors (called \emph{local coefficients for $P$} there) defined in \cite{JPSS83} and mentioned in Remark \ref{rk-Rankin-Selberg} above normalize this operator in the sense that the resulting operator $\mathcal{A}$ is equivariant with respect to the left and right actions of $G$ and $L$ on the spaces $L^2(G/N)$ and $L^2(G/\bar{N})$ and is compatible with the Whittaker functionals defined on the Schwartz Spaces of $G/N$ and $G/\bar{N}$.
The operator $\mathcal{A}$ is obtained as the convolution of $A$ and the distribution on $L$ that generates the Rankin-Selberg factors as in \eqref{eq-Delta}.

\begin{remark}
A comparison between the Braverman-Kazhdan approach and the Langlands-Shahidi method can be found in the recent paper \cite{ShahidiWWL}. See also \cite{ShahidiVinberg} and similar questions discussed in connection with the doubling method in \cite{GetzLiu}.
\end{remark}

\begin{remark}
Concerning real groups, results were obtained in \cite{Shahidi85} and \cite{Arthur89}, where unitarity of $\mathcal{A}$ is established.
\end{remark}

The case treated in the C*-algebraic framework in Proposition \ref{prop-Gamma} above corresponds to the situation when $p=1$. The result in Proposition \ref{resultat} suggests that studying intertwining relations at the level of C*-algebraic universal principal series and performing C*-algebraic normalization explicitly may help shed some light on the problem of generating $\gamma$-factors in the sens of Braverman and Kazhdan. Of particular interest is the question of determining an explicit formula for the kernel of the normalized operator $\mathcal{A}$ or, equivalently, for the central distribution satisfying \eqref{eq-Delta} beyond the Godement-Jacquet case.

\subsection*{Acknowledgements} The remark presented here was first made in the context of a seminar held in the Mathematics Department of the University of Orl\'eans, involving A. Alvarez, P. Julg and V. Lafforgue. The author thanks Vincent Lafforgue for many enlightening discussions about $\gamma$-factors and Wen-Wei Li for bringing recent developments to his attention.


\bibliographystyle{amsalpha}
\bibliography{biblio}

\end{document}